\begin{document}

 \title{Note on the (non-)smoothness of discrete time value functions}
 \author{S\"oren Christensen\thanks{Mathematisches Seminar, Christian-Albrechts-Universit\"at zu Kiel, Ludewig-Meyn-Str.  4, D-24098 Kiel, Germany, E-mail:  christensen@math.uni-kiel.de}, Simon Fischer\thanks{Mathematisches Seminar, Christian-Albrechts-Universit\"at zu Kiel, Ludewig-Meyn-Str.  4, D-24098 Kiel, Germany, E-mail:  fischer@math.uni-kiel.de}}
 \maketitle
 
 \begin{abstract}
  We consider the discrete time stopping problem  
\[ V(t,x) = \sup_{\tau}\e_{(t,x)}[g(\tau, X_\tau)],\]
  where $X$ is a random walk. It is well known that the value function $V$ is in general not smooth on the boundary of the continuation set $\partial C$. We show that under some conditions $V$ is not smooth in the interior of $C$ either. More precisely we show that $V$ is not differentiable in the $x$ component on a dense subset of $C$.
  As an example we consider the Chow-Robbins game. We give evidence that as well $\partial C$ is not smooth and that $C$ is not convex, even if $g(t,\cdot)$ is for every $t$.
 \end{abstract}
\noindent \textbf{Keywords:} optimal stopping, discrete time stopping problem, random walk, smoothness, continuation set, Chow-Robbins game, value function\\
 

\section{Motivation}
Let $X$ be a Markov process and
\begin{equation}\label{eq:vf}
 V(t,x) = \sup_{\tau}\e_{(t,x)}[g(\tau, X_\tau)]
\end{equation}
a stopping problem, where the supremum is taken over a.s. finite stopping times $\tau$. If $X$ is time continuous we usually want to find $V$ on $\RR^{+}\times \RR$ or a sub region. If $X$ is time discrete $V$ is often defined on $\NN\times \RR$ or $\NN\times\ZZ$, but for many problems it seems natural that the process can be started in any point $(t,x)\in \RR^{+}\times \RR$. 
In the continuous setting, $V$ is smooth under some conditions if the smooth fit principle holds, see e.g. \cite{strulovici15}. But even if smooth fit does not hold we can hope to find a solution, using the associated free boundary problem, that will be smooth on the continuation set $C$, see e.g. \cite{peskir06}. In the discrete setting this is however not the case. 
Let $X = (X_n)_{n\in\NN} = (\sum_{i=1}^{n}\xi_{i})_{n\in\NN}$ be a random walk, where the $\xi_i$ are iid. and take discrete values with positive probability. We show in Section \ref{sec:res} that, under some conditions on $g$, the value function \eqref{eq:vf} is not smooth on $C$. More precisely we can show that for every $t$ there is a dense subset of $C\cap\{t\}\times \RR$ on which $V(t,\cdot)$ is not differentiable, see Theorem \ref{th:dif}. As an example we consider the Chow-Robbins game in Section \ref{sec:ex}.
These results lead to the conjecture that the stopping boundary $\partial C$ is not smooth on a dense set either. We will not prove this in general, but give numerical examples in Section \ref{sec:ex}. \\
These results are interesting for different reasons. They show that we can not hope to find a closed form solution for these problems. On the other hand, they give an interesting qualitative characterization of $V$ and $b$ and show that discrete time problems behave quite differently from their time continuous counterparts.

\section{Results}\label{sec:res}
For simplicity we will assume that the gain function $g$ is continuous and partially differentiable in $x$. This condition is not entirely necessary, and could be relaxed.
Let $\xi_i$ be iid. real random variables, such that there exist 
$x_1<0<x_2$ with $P(\xi_1=x_1)>0$ and $P(\xi=x_2)>0$ and let $X_n = \sum_{i=1}^{n}\xi_{i}$.
We assume that the stopping problem \eqref{eq:vf} is solved by an a.s. finite stopping time $\tau_{\ast}$, i.e.
\begin{equation}\label{eq:v2}
 V(t,x) = \e[g(t+\tau_{\ast}, x+X_{\tau_{\ast}})].
\end{equation}
 We denote the continuation set with $C$ and the stopping set with $D$. We furthermore assume that the boundary of the continuation set $\partial C$ is the graph of a function $b:\RR^{+}\to \RR$. W.l.o.g. we assume that the stopping set lies above $b$, i.e. $(t,x)\in D$ if $x\geq b(t)$. Before we state our main result, we need the following lemma.

\begin{lemma}\label{lem:conv}
Let $X_n = \sum_{i=1}^{n}\xi_{i}$ and $\xi^{\ast}$ be the maximal upward jump size of $X$ (possibly $\xi^{\ast} = \infty$). If there exists an $\varepsilon>0$ such that $g(t,\cdot)$ is convex on $[b(t)-\varepsilon,b(t)+\xi^{\ast}+\varepsilon]$ for all $t $, then $V(t,\cdot)$ is convex on $(-\infty, b(t)+\xi^{\ast}+\varepsilon]$ for all $t$.
\end{lemma}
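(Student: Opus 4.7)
The plan is to use the representation $V(t,x) = \sup_\tau \e[g(t+\tau, x+X_\tau)]$ together with the fact that a supremum of convex functions is convex. For each stopping time $\tau$ of the filtration of $X$ (hence not depending on $x$), the map $x\mapsto \e[g(t+\tau, x+X_\tau)]$ inherits convexity from $g(t+\tau(\omega),\cdot)$: on any interval where $g(t+\tau(\omega),\cdot)$ is convex at $x+X_\tau(\omega)$ for almost every $\omega$, the expectation is convex in $x$. The goal is thus to identify a sufficiently rich family of stopping times realising the supremum whose payoffs always fall into the convex range of $g$.

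I would split the target interval into the stopping part $[b(t), b(t)+\xi^{\ast}+\varepsilon]$, where the dynamic programming principle gives $V(t,x)=g(t,x)$ (convex by hypothesis), and the continuation part $(-\infty,b(t))$, where $V(t,x)=\e[V(t+1,x+\xi_1)]$. The continuation part is naturally attacked via induction on the finite-horizon value $V_N(t,x)=\sup_{\tau\le N}\e[g(t+\tau,x+X_\tau)]$, satisfying $V_{N+1}(t,x)=\max(g(t,x),\e[V_N(t+1,x+\xi_1)])$ and $V_N\uparrow V$. Both the $\max$ and the expectation operation preserve convexity on intervals where their inputs are convex, so once the intervals mesh from one time step to the next, the induction is clean; the limit $V_N\uparrow V$ then preserves convexity pointwise.

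The crux is precisely this meshing of intervals. Starting from $x\le b(t)+\xi^{\ast}+\varepsilon$ and using the optimal $\tau_{\ast}$, the process first enters the stopping set via a jump of size at most $\xi^{\ast}$, so $x+X_{\tau_{\ast}}\in[b(t+\tau_{\ast}),\,b(t+\tau_{\ast}-1)+\xi^{\ast}]$; for this to lie in the convex region $[b(t+\tau_{\ast})-\varepsilon,\,b(t+\tau_{\ast})+\xi^{\ast}+\varepsilon]$ of $g$, one needs $b(t+\tau_{\ast}-1)\le b(t+\tau_{\ast})+\varepsilon$, i.e.\ some control on how fast $b$ can drop in one time step. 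Equivalently, in the inductive step, pushing from $x<b(t)$ one step forward produces values $x+\xi_1$ as large as $b(t)+\xi^{\ast}$, which must remain in the convex region $(-\infty,b(t+1)+\xi^{\ast}+\varepsilon]$ of $V(t+1,\cdot)$, so that $b(t+1)\ge b(t)-\varepsilon$ is required.

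The hardest step, if such a priori regularity of $b$ is not available, will be to restrict the supremum in the sup representation to stopping times whose realised payoffs $g(t+\tau,x+X_\tau)$ stay inside the convex region of $g$ almost surely, and to verify that this restricted supremum still equals $V(t,x)$ on the whole interval $(-\infty, b(t)+\xi^{\ast}+\varepsilon]$. One would also have to glue the convexity statements on $(-\infty,b(t))$ and $[b(t), b(t)+\xi^{\ast}+\varepsilon]$ at the boundary point $b(t)$ by comparing the left derivative of $\e[V(t+1,\cdot+\xi_1)]$ at $b(t)$ with the right derivative of $g(t,\cdot)$ there, a matching argument that should follow from the optimality equation $V(t,b(t))=g(t,b(t))=\e[V(t+1,b(t)+\xi_1)]$.
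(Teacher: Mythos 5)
Your overall strategy---represent $V(t,\cdot)$ as a supremum of convex functions $x\mapsto\e[g(t+\tau,x+X_\tau)]$ over a family of stopping times not depending on $x$---is exactly the paper's starting point. But your write-up stops at the decisive step: you explicitly defer ``the hardest step'', namely exhibiting a family of stopping times that simultaneously (i) keeps the realised payoff points inside the convex region of $g$ almost surely and (ii) still attains $V$ at every point of the interval. Without that construction there is no proof, and your fallback route via backward induction on finite horizons runs straight into the ``meshing'' problem you yourself diagnose (you would need $b(t+1)\ge b(t)-\varepsilon$ and a convergence argument $V_N\uparrow V$, neither of which is available from the lemma's hypotheses alone).

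The missing idea is a localisation in the starting point rather than in time. Fix $x\le b(t)+\xi^{\ast}+\tfrac{\varepsilon}{2}$ and work on the window $I=[x-\tfrac{\varepsilon}{2},x+\tfrac{\varepsilon}{2}]$ of length $\varepsilon$. For each $y\in I$ let $\tau_y=\inf\{n: y+X_n\ge b(t+n)\}$ be the boundary hitting time of the path started at $y$, and set $V_y(z)=\e[g(t+\tau_y,z+X_{\tau_y})]$ for $z\in I$. Because $y+X_{\tau_y}$ lands in the overshoot interval $[b(t+\tau_y),\,b(t+\tau_y)+\xi^{\ast}]$ and $|z-y|\le\varepsilon$, the argument $z+X_{\tau_y}$ stays in $[b(t+\tau_y)-\varepsilon,\,b(t+\tau_y)+\xi^{\ast}+\varepsilon]$, i.e.\ in the convex region of $g$; hence each $V_y$ is convex on $I$. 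Moreover $V_y\le V(t,\cdot)$ on $I$ with equality at $z=y$ (since $\tau_y$ is optimal for the start $y$), so $V(t,\cdot)=\sup_{y\in I}V_y$ on $I$ is convex there, and convexity on all such overlapping windows yields convexity on $(-\infty,b(t)+\xi^{\ast}+\varepsilon]$. One remark in your favour: your observation that the overshoot is a priori only bounded by $b(t+\tau-1)+\xi^{\ast}$ rather than $b(t+\tau)+\xi^{\ast}$ is a genuine point that the paper's one-line containment claim also glosses over; it is harmless when $b$ is nondecreasing (as in the paper's applications) but deserves a word in either argument.
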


\begin{proof}
 Let $(t,x)$ be fixed with $x\leq b(t) + \xi^{\ast}+\frac{\varepsilon}{2}$ and $I:=[x-\frac{\varepsilon}{2}, x+\frac{\varepsilon}{2}]$. For $y,z\in I$ we define 
 \[\tau_{y} :=\inf \{n\mid y+ X_n \geq b(t+n)\}\]
 and
 \[V_y(z):= \e[g(t+\tau_y,z + X_{\tau_y})].\]
 If $y\geq b(t)$, then $\tau_{y} = 0$ and $V_y(z) = g(t,z)$.
  Since $z + X_{\tau_y}\in [b(t+\tau_y)-\varepsilon,b(t+\tau_y)+\xi^{\ast}+\varepsilon]$, $V_y(z)$ is convex on $I$.
  We have $V_y(z) \leq V(t,z)$ for all $y,z\in I$, and $V_y(y) = V(t,y)$, therefore 
  \[V(t,z) = \sup_{y\in I} V_y(z).\]
   As the supremum over convex functions $V(t,\cdot)$ is convex on $I$. Since this construction, particularly $\varepsilon$, does not depend on $x$ and $t$, $V(t,\cdot)$ is convex on $(-\infty, b(t)+\xi^{\ast}+\varepsilon]$ for all $t$.
\end{proof}

\begin{remark}
 For more general processes $X$ a convex gain function $g(t,\cdot)$ does not always yield a convex value function. In \cite{villeneuve07} an example for a diffusion $X$ with linear gain function, is given, that has a non-convex value function, see also \cite{alvarez17}.
\end{remark}

\begin{theorem}\label{th:dif}
 Let $X$, $V$ and $b$ be as defined above. 
 We assume that:
 \begin{itemize}
  \item $V$ does not follow the smooth fit principle, i.e. for every point $(t,x)$ on $\partial C$, the partial derivative $\frac{\partial}{\partial x} V(t.x)$ does not exist,
  \item There exists an $\varepsilon$ such that $g(t,\cdot)$ is convex on $[b(t)-\varepsilon,b(t)+\xi^{\ast}+\varepsilon]$ for every $t$,
  \item $b$ is monotonic,
  \item $b$ is unbounded,
  \item $(b(t+1)-b(t))\to 0$, as $t\to \infty$,
 \end{itemize}
 then
 for every $t$ there is a dense subset of $C\cap\{t\}\times \RR$ on which $V(t,\cdot)$ is not differentiable.\\
\end{theorem}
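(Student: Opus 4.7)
By Lemma~\ref{lem:conv}, $V(t,\cdot)$ is convex on $(-\infty, b(t)+\xi^{\ast}+\varepsilon]$, so the one-sided derivatives $V'_{\pm}(t,\cdot)$ exist there and the ``jump'' $J(t,x) := V'_+(t,x) - V'_-(t,x) \ge 0$ is well defined; non-differentiability at $x$ is the same as $J(t,x) > 0$, and by non-smooth fit $J(t, b(t)) > 0$ for every $t$. The Bellman equation $V(t,x) = \e[V(t+1, x+\xi_1)]$ on $C$, together with the interchange of one-sided derivative and expectation (legitimized by monotone convergence of convex difference quotients), yields
\[
J(t,x) = \e\bigl[J(t+1, x+\xi_1)\bigr] \;\ge\; p_1\, J(t+1, x+x_1) + p_2\, J(t+1, x+x_2),
\]
where $p_i := P(\xi_1 = x_i) > 0$. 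Consequently, a kink of $V(t+1,\cdot)$ at $y$ produces a kink of $V(t,\cdot)$ at $y - x_i$ whenever $(t, y-x_i) \in C$. Iterating along any sequence $\eta_1, \ldots, \eta_n \in \{x_1, x_2\}$ whose partial sums $z_j := b(t+n) - \sum_{i \le j}\eta_i$ remain \emph{admissible}, i.e.~$z_j < b(t+n-j)$ for $j = 1,\ldots,n$, the boundary kink at $b(t+n)$ propagates into a kink of $V(t,\cdot)$ at $z_n$.

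Fix $y^{\ast} < b(t)$ and $\delta \in (0, b(t) - y^{\ast})$, and set $d := x_2 - x_1 > 0$, $\beta := -x_1/d \in (0,1)$. Consider the path that uses $k$ copies of $x_2$ first and $n-k$ copies of $x_1$ afterward; then $z_n = b(t+n) - n x_1 - k d$, and all intermediate admissibility inequalities reduce (by monotonicity of the discretised bound in $j$) to the final one
\[
k > \frac{b(t+n) - b(t)}{d} + \beta n,
\]
while the $x_2$-phase admissibilities hold automatically for $n$ large since $b(s{+}1)-b(s) < x_2$ eventually. The target $k \approx (b(t+n) - y^{\ast})/d + \beta n$ exceeds this lower bound by the positive buffer $(b(t) - y^{\ast})/d$, and since $b(t+n)/n \to 0$ by Ces\`aro applied to $b(t+1) - b(t) \to 0$, for $n$ large the target lies in $\{0,\ldots,n\}$. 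It remains to find $n$ and integer $k$ with $\bigl| b(t+n) - n x_1 - kd - y^{\ast} \bigr| < \delta$, i.e.~$(b(t+n) - n x_1) \bmod d$ within $\delta$ of $y^{\ast} \bmod d$. The residues $A_n := (b(t+n) - n x_1) \bmod d$ accumulate densely in $[0, d)$: a sub-sampling argument selects $q$ with $q|x_1| \bmod d$ arbitrarily small (rational case: $q$ a period of $n|x_1| \bmod d$; irrational case: by equidistribution of $n|x_1|$ on $\RR/d\ZZ$), after which the subsequence $(A_{n_0 + qk})_k$ has consecutive differences tending to $0$ in $\RR/d\ZZ$ while $A_n$ itself diverges in $\RR$, which forces density.

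The central obstacle is orchestrating the density approximation and the admissibility check simultaneously: one needs to approach $y^{\ast}$ to within $\delta$ \emph{and} to stay in $C$ at every one of the $n$ intermediate times. The buffer $(b(t) - y^{\ast})/d$ absorbs the admissibility slack, while the two hypotheses on $b$—unboundedness and $b(t+1) - b(t) \to 0$—play a dual role, providing both density of the residues modulo $d$ and smallness of the intermediate drifts on the backward path. The argument is symmetric if $b$ is monotonically decreasing, with the same ordering of path steps and only a change of sign in the intermediate drift terms.
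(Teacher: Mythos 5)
Your proof follows the same two-part skeleton as the paper's (propagate the boundary kink backwards into $C$ along positive-probability paths, then show the reachable points are dense), but both halves are implemented differently. For part 1, the paper decomposes $V(t,x)$ over the events $\{\tau_{\ast}<m\}$, $\{\tau_{\ast}=m,\ x+X_m=b(t+m)\}$, $\{\tau_{\ast}=m,\ x+X_m>b(t+m)\}$, $\{\tau_{\ast}>m\}$ and compares the resulting one-sided derivatives term by term ($a_1>b_1$ from no smooth fit, the rest ordered by convexity); your one-step recursion $J(t,x)=\e[J(t+1,x+\xi_1)]\ge p_i\,J(t+1,x+x_i)$ for the derivative jump of a convex function is cleaner, makes the non-cancellation visible at a glance, and iterates more transparently --- it uses exactly the same two ingredients (Lemma~\ref{lem:conv} and failure of smooth fit). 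For part 2, the paper splits into the cases where the up- and down-jump sizes do or do not have a common multiple and builds the path from sorted blocks $\Lambda_{m^\ast}$ returning to level zero plus a final sorted block $\Lambda_y$; your down-then-up path with $k$ up-steps and the reduction to hitting a residue class modulo $d=x_2-x_1$ unifies the two cases and makes the role of the hypotheses ($b$ unbounded, $b(t+1)-b(t)\to 0$) more explicit. Both density arguments are at a comparable level of rigour; the paper's is, if anything, sketchier at the corresponding step (``the claim will hold for some $k$'').

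Two caveats. First, the general principle you invoke --- ``consecutive differences tending to $0$ in $\RR/d\ZZ$ while the lift diverges forces density'' --- is false as stated ($B_k=kd$ is a counterexample). Your instance is nonetheless fine, but for a different reason in each case: if $q|x_1|\bmod d$ is a small nonzero rotation, the circle-rotation argument applies; if $q|x_1|$ is an exact multiple of $d$, the residual increments are $b(t+n_0+q(k+1))-b(t+n_0+qk)$, which tend to $0$ \emph{in} $\RR$ and have divergent sum (unboundedness of $b$), and that is what forces density --- you should say this explicitly rather than appeal to the false general principle. Second, the reduction of all down-phase admissibility constraints to the final one $z_n<b(t)$ uses that $z_j$ increases while $b(t+n-j)$ decreases in $j$, i.e.\ it assumes $b$ nondecreasing; for decreasing $b$ the finitely many constraints at small times (where the increments of $b$ need not yet be small) require separate attention, and ``the argument is symmetric'' does not quite cover this. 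The paper's own proof is no more careful on this point, so I regard it as a shared imprecision rather than a defect specific to your approach.
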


\begin{proof}
 We split the proof into two parts:
 \begin{enumerate}
  \item We show that if we start in $(t,x)$ and we hit $\partial C$ with positive probability, i.e. there exists $m\in \NN$ such that $P(x+X_m=b(t+m), \tau_{\ast}\geq m)>0$, then $V(t,\cdot)$ is not differentiable in $(t,x)$.
  \item These points are dense in $C\cap\{t\}\times \RR$, for every $t>0$.
 \end{enumerate}

1. Let $(t,x)\in C$ and $m\in \NN$, such that
  \[P(x+X_m=b(t+m),\tau_{\ast}= m)=:p>0,\]
 then
  \begin{align*}
   V(t,x) = & p \cdot g(t+m,b(t+m)) +  \e[g(t+\tau_{\ast},x+X_{\tau_{\ast}})
   \mathbb{I}\{ {\tau_{\ast}< m}\}]\\
    + & \e[g(t+m,x+X_m) \mathbb{I}\{ \tau_{\ast} = m, x + X_m>b(t+m)\}] \\
    +&  \e[V(t+m,x + X_m) \mathbb{I}\{ \tau_{\ast} > m\}],
  \end{align*}
 where $\mathbb{I}\{\}$ denotes the indicator function. We take the right and the left derivative in $x$, should one of these not exist we are already done. We write $g'$ for $\frac{\partial}{\partial x}g$:
 \begin{align*}
 a:=& \lim_{h\downarrow 0}\frac{V_n(x+h) - V_n(x)}{h} = 
 \underbrace{ p \cdot g'(m+n,b(m+n))}_{a_1} \\
 +& \underbrace{ \e[g'(t + \tau_{\ast},x+X_{\tau_{\ast}}) \mathbb{I}\{ {\tau_{\ast}< m}\}]}_{a_2}\\
    + & \underbrace{\e[g'(t+m,x+X_m) \mathbb{I}\{ \tau_{\ast} = m,x+ X_m>b(t+m)\}] }_{a_3}\\
    + & \underbrace{\lim_{h\downarrow 0}\frac{1}{h}\e[(V(t+m,x+X_m+h)- V(t+m,x+X_m)) \mathbb{I}\{ \tau_{\ast} > m\}]}_{a_4},
  \end{align*}
  
 \begin{align*}
  b:=& \lim_{h\downarrow 0}\frac{V(t+m,x) - V(t+m,x-h) }{h} = \\
  & \underbrace{p \cdot  \lim_{h\downarrow 0}\frac{1}{h}
 \big(g(t+m,b(t+m)) - V(t+m,b(t+m)-h) \big)}_{b_1}\\
+&  \underbrace{\e[g'(t+\tau_{\ast},x+X_{\tau_{\ast}}) \mathbb{I}\{ {\tau_{\ast}< m}, x+X_{\tau_{\ast}}>b(t+\tau_{\ast})\}]}_{b_2}\\
+& \underbrace{\lim_{h \downarrow 0}\e[(g(t+\tau_{\ast},x+X_{\tau_{\ast}}) - V(t+\tau_{\ast},x+X_{\tau_{\ast}}-h))  \mathbb{I}\{ {\tau_{\ast}< m}, x+X_{\tau_{\ast}}=b(t+\tau_{\ast})\}]}_{b_2'}\\
   + & \underbrace{\e[g'(t+m,x+X_m) \mathbb{I}\{ \tau_{\ast} = m, x+X_m>b(t+m)\}] }_{b_3}\\
  + & \underbrace{\lim_{h \downarrow 0}\frac{1}{h}\e[(V(t+m,x+X_m) - V(t+m,x+X_m-h))  \mathbb{I}\{ \tau_{\ast} > m\}]}_{b_4}.
 \end{align*}
 We see that $a_1>b_1$ since smooth fit does not hold by assumption,
 $a_2\geq b_2 + b_2'$ since $V(t,\cdot)$ is convex by Lemma \ref{lem:conv},
 $a_3=b_3$ and
 $a_4\geq b_4 $ again since $V(t,\cdot)$ is convex. It follows that $a>b$ and thereby claim 1.\\
 2. Given $(t,x)\in C$ and $\varepsilon >0$, we show that there is an 
 $x'\in (x-\varepsilon, x+\varepsilon)$, such that $(t,x')$ fulfills the assumptions of proof part 1, i.e.
 there exists a $m\in \NN$, such that $P(x' + S_m=b(t+m)', \tau_{\ast} = m)>0$.  
 We distinguish two cases.\\
 Case 1. The upward jump sizes and the downward jump sizes with positive probability have a common multiple.
 Then there exists a $m^{\ast}\in\NN$ with $P(S_{m^{\ast}}=0)>0$. 
 There exists a series increments $\lambda_1, \dots,\lambda_{m^{\ast}}$ such that 
 $\sum^{m^{\ast}}_{i=1}\lambda_i = 0$ and $P(\xi_1=\lambda_1, \dots,\xi_{m^{\ast}} = \lambda_{m^{\ast}})>0$. Changing the order of the increments does not change the probability, i.e. we have a series of increments $\Lambda_{m^{\ast}} = (\lambda_{k_1}, \dots,\lambda_{k_{m^{\ast}}})$ with the above properties and $\lambda_{k_i}\leq \lambda_{k_{i+1}}$ for all $i\leq m^{\ast}-1$.\\
We choose $N$ such that $b(s+m^{\ast})-b(s)<\varepsilon$ for all $s\geq N$. We choose $m$ such that $t+m\geq N$ and $y\geq b(t+m)$ with $P(x+S_m=y)> 0$. 
Again we find a series of increments $\Lambda_{y} = (\lambda_{1}^{y}, \dots,\lambda_{m}^{y})$, with $x+\sum^{m}_{i=1}\lambda_i^{y} = y$, $P(\xi_1=\lambda_1^{1}, \dots,\xi_{m} = \lambda_{m}^{y})>0$ and $\lambda_{i}^{y}\leq \lambda_{i+1}^{y}$. An illustration of the construction in this part is shown in Figure \ref{fig:path}.\\
Let $k^{\ast} = \min \{k \mid b(t+m+k m^{\ast})\geq y\}$, then 
$b(t+m+k^{\ast} m^{\ast})-y=:\varepsilon'\leq \varepsilon$. Now $x':=x+\varepsilon'$ is our candidate starting point, and we need to show that there is a path from $(t,x')$ to $(t+m+k m^{\ast},b(t+m+k m^{\ast}))$, that has positive probability and lies in $C$. We take the path with the increments 
$\underbrace{\Lambda_{m_\ast}\dots \Lambda_{m_\ast}}_{ k^{\ast} \text{ times}}\Lambda_y$, this clearly has positive probability. The first $k^{\ast}m^{\ast}$ steps clearly lie in $C$, but the last part might not, since we can not assume $C$ to be convex. If it does not lie in $C$ we start the same procedure with $(t+km,k(y-x)+x)$ instead of $(t+m,y)$ and since $b(t)$ grows slower with increasing $t$, but the jump sizes in $\Lambda_y$ 
does not change, the claim will hold for some $k$.\\ 

   \begin{figure}[h]
	\centering
  \includegraphics[width=0.9\textwidth]{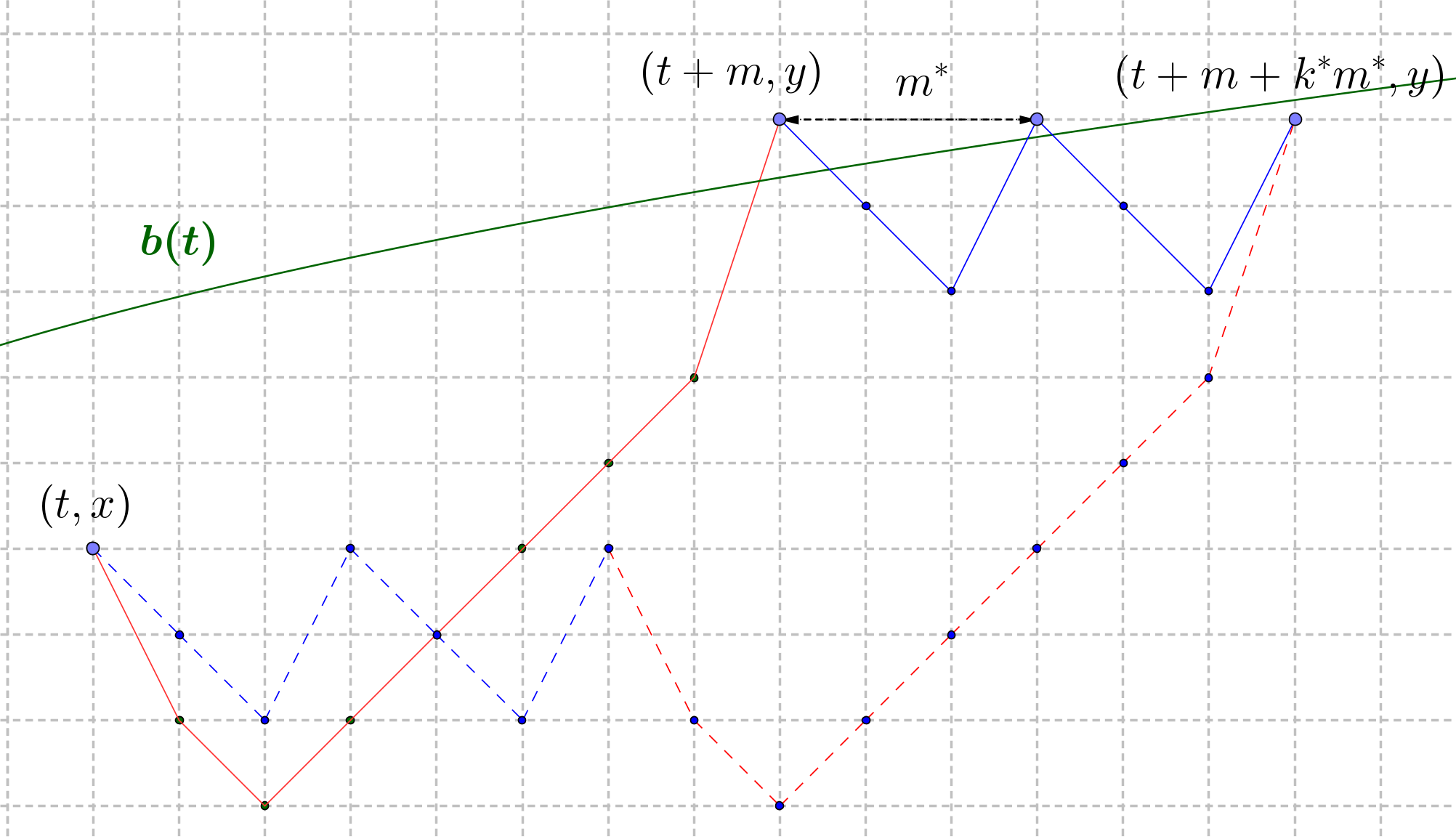}
	\caption{Possible paths from $(t,x)$ to $(t+m+k^{\ast}m^{\ast},y)$}
	\label{fig:path}
\end{figure}

Case 2. The upward jump sizes and the downward jump sizes with positive probability have no common multiple.\\
We find a $m^{\ast}$, a $-\frac{\varepsilon}{2}<\tilde x \leq 0$ and $N$ such that $P(S_{m^{\ast}}=\tilde x)>0$ and $b(s+m^{\ast})-b(s)<\frac{\varepsilon}{2}$ for all $s\geq N$. Now the rest follows analogous to case 1.
 \end{proof}
 
\begin{remark}
 The first part of the proof stays true for almost any discrete stopping problem, that does not have smooth fit. Most of the assumptions were needed to prove that the points lie dense.
\end{remark}

\begin{remark}
 $X$ does not need to be a random walk. The proof works the same way, if $X$ is a discrete time Markov process, such that there exist 
$x_1<0<x_2$ with $P(X_{n+1}-X_n=x_1\mid \mathcal{F}_n)>0$ and $P(X_{n+1}-X_n=x_2\mid \mathcal{F}_n)>0$ a.s. for all $n\in \NN$, where $\mathcal{F}_n$ is the natural filtration.
 
\end{remark}

\begin{remark}
 The non smoothness of $V$ suggests that in the setting of Theorem \ref{th:dif} the stopping boundary $b(t)$ is not smooth either. In points $(t,b(t))$ on the boundary from which we could jump exactly onto the boundary again we would expect that if existent
 \[\lim_{h\uparrow 0} \frac{1}{h}(b(t+h)-b(t))<\lim_{h\downarrow 0} \frac{1}{h}(b(t+h)-b(t)).\]
 These points lie dense on $\partial C$. This means in particular that the continuation set $C$ is not convex. We will not prove this conjecture in general, but study examples in the next section.
\end{remark}

\section{Examples}\label{sec:ex}
The assumptions in Theorem \ref{th:dif} may seem a bit arbitrary at first sight, but they hold for many stopping problems. The slow growth condition for $b$, $(b(n+1)-b(n))\to 0$, follows from the law of the iterated logarithm if the $\xi_i$ have second moments, the monotonicity can often be deduced directly from $g$, e.g. if $g(\cdot,x)$ is non increasing on $D$.
That $V$ has no smooth fit can be assumed in most cases, and can be shown as in Example \ref{ex:1}. \\
As an explicit example we want to mention the Chow-Robbins game. 

\begin{example}\label{ex:1}
 Let $\xi_1, \xi_2 \dots$ be iid. random variables with 
$P(\xi_i =-1) =P(\xi_i =1) = \frac{1}{2} $,
$X_n = \sum_{i=1}^{n}\xi_i$ and $g(t,x)=\frac{x}{t}$.
The stopping problem
\begin{equation}\label{eq:cr}
V(t,x)=\sup_{\tau  }\e \left [ \frac{x+ X_\tau}{t+\tau}\right ].
\end{equation}
is called Chow-Robbins game. It was introduced in 1965 in \cite{chow1965}. In \cite{snproblem} the authors recently showed how to calculate a good approximation for \eqref{eq:cr}. The stopping problem \eqref{eq:cr} fulfills the conditions of Theorem \ref{th:dif}. $g(t,\cdot) = \frac{\cdot}{t}$ is convex for all $t$, hence so is $V(t,\cdot)$. To see that smooth fit does not hold we calculate the left and right derivative, we write $g'$ for $\frac{\partial}{\partial x}g$:
\begin{align*}
 & \lim_{h\downarrow 0}\frac{V(t,b(t)+h) - V(t,b(t))}{h} = g'(t,x) = \frac{1}{t}
\end{align*}

\begin{align*}
 & \lim_{h\uparrow 0}\frac{V(t,b(t)+h) - V(t,b(t))}{h} \\
 =& \underbrace{\lim_{h\uparrow 0}\frac{1}{2h}\Big( g\big(t+1,b(t)+h+1\big) - g\big(t+1,b(t)+1\big)}_{=g'(t+1,b(t)+1)} \\
 &+ \underbrace{V\big(t+1,b(t)+h-1 \big) - V\big(t+1,b(t)-1\big) \Big)}_{\leq g'(t+1,b(t)-1) \text{ since $V$ is convex}}\\
 \leq & \frac{1}{2(t+1)} + \frac{1}{2(t+1)} =\frac{1}{t+1}<\frac{1}{t}.
\end{align*}
For the properties of $b(t)$ see \cite{chow1965, snproblem, lai2006}.
 With Theorem \ref{th:dif} we see that the value function $V$ is not differentiable on a dense subset of the continuation set C. 
Figure \ref{fig:1} shows $V$ for the fixed time $t=1$. Some non smooth points can be seen in the plot:
\begin{itemize}
 \item $x_0 = 0.46$ is the smallest value of $x$ for which it is optimal to stop. We see that $V$ does not follow the smooth fit principle.
 \item $x_1 = -0.22$ is the smallest value for which $(2,x+1)$ is in the stopping set $D$.
  \item $x_2 = -0.97$ is the smallest value for which $(3,x+2)\in D$. 
  \end{itemize}
In Figure \ref{fig:2} $V$ is given for $t=5$.
  
  \begin{figure}[h]
	\centering
  \includegraphics[width=0.8\textwidth]{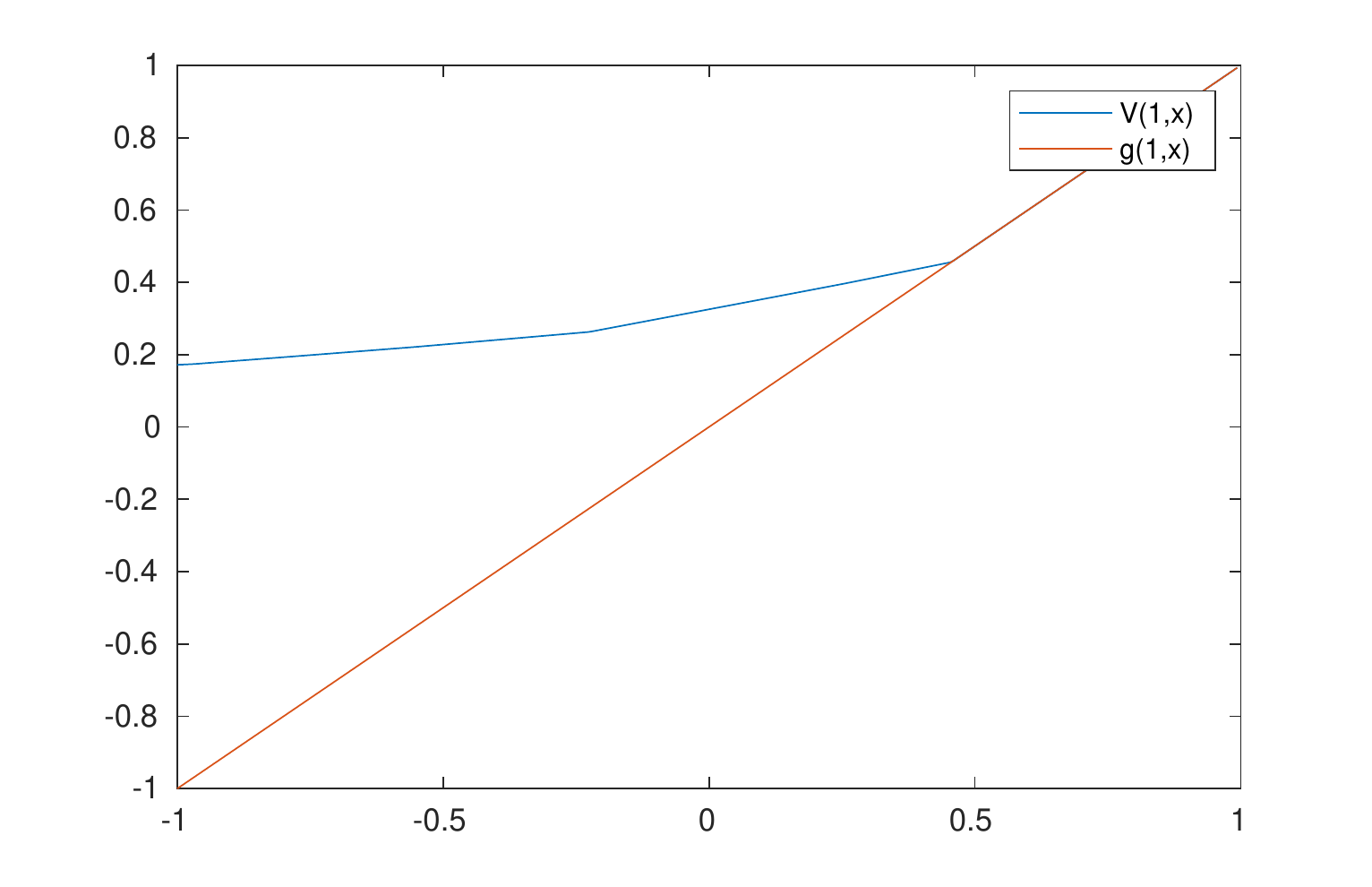}
	\caption{The value function of Example \ref{ex:1} $V(1,\cdot)$ (blue) and the gain function $g(1,\cdot)$ (orange). Some non smooth points can be seen. }
	\label{fig:1}
  \end{figure}

  \begin{figure}[h]
	\centering
  \includegraphics[width=0.8\textwidth]{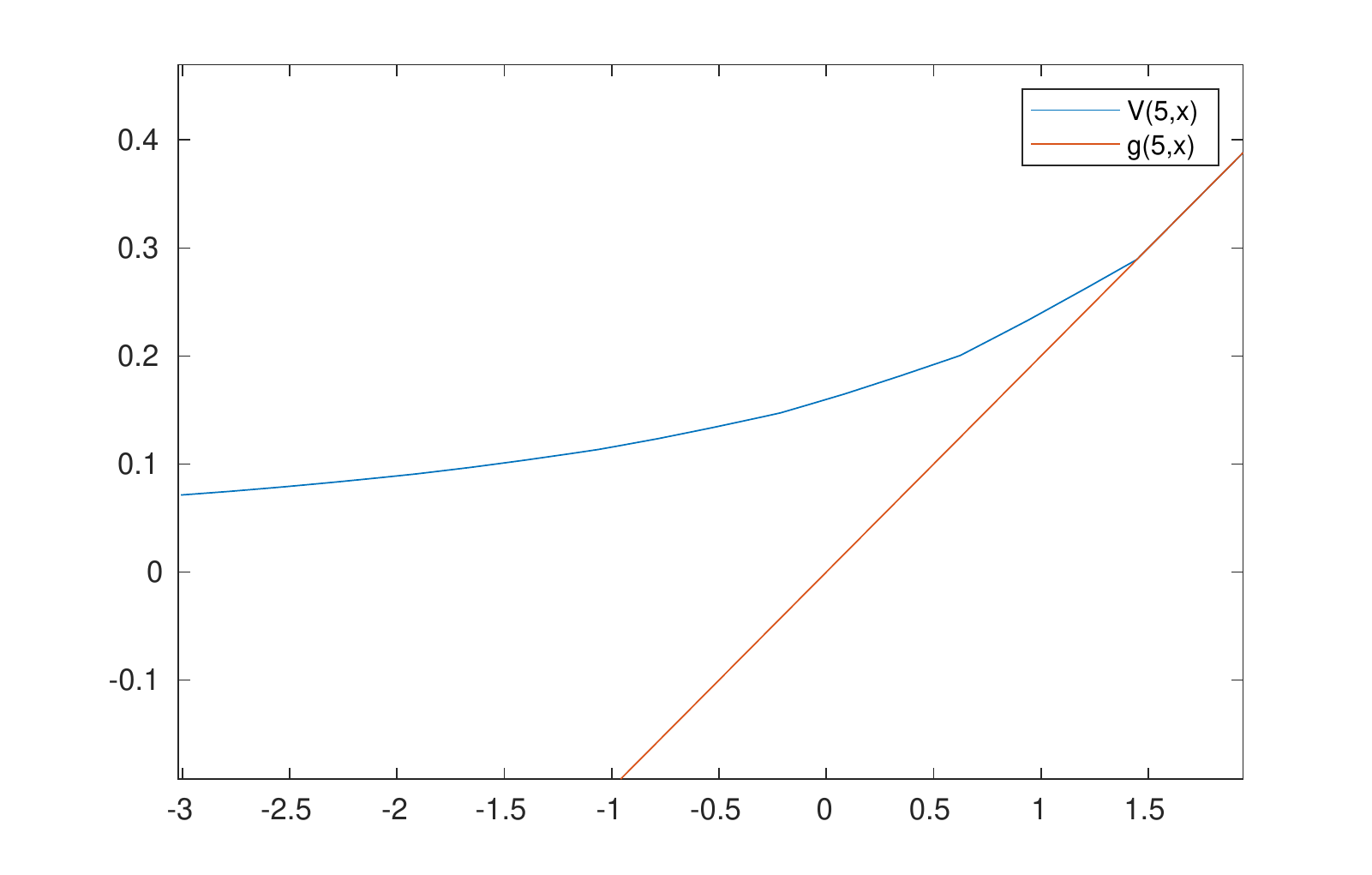}
	\caption{The value function of Example \ref{ex:1} $V(5,\cdot)$ (blue) and the gain function $g(5,\cdot)$ (orange).  }
	\label{fig:2}
\end{figure}

No one found a closed form for $V$ so far. The fact that $V$ is not differentiable on a dense set shows, that it is highly unlikely that this is even possible.
\end{example}

\begin{example}[$C$ is not convex]\label{ex:2}
The continuation set of the previous example is not convex and the stopping boundary most likely not differentiable in $t$, see Figure \ref{fig:nconvS}. We change the setting slightly, in order to make the effect stronger and more visible.\\

  \begin{figure}[h]
	\centering
  \includegraphics[width=0.9\textwidth]{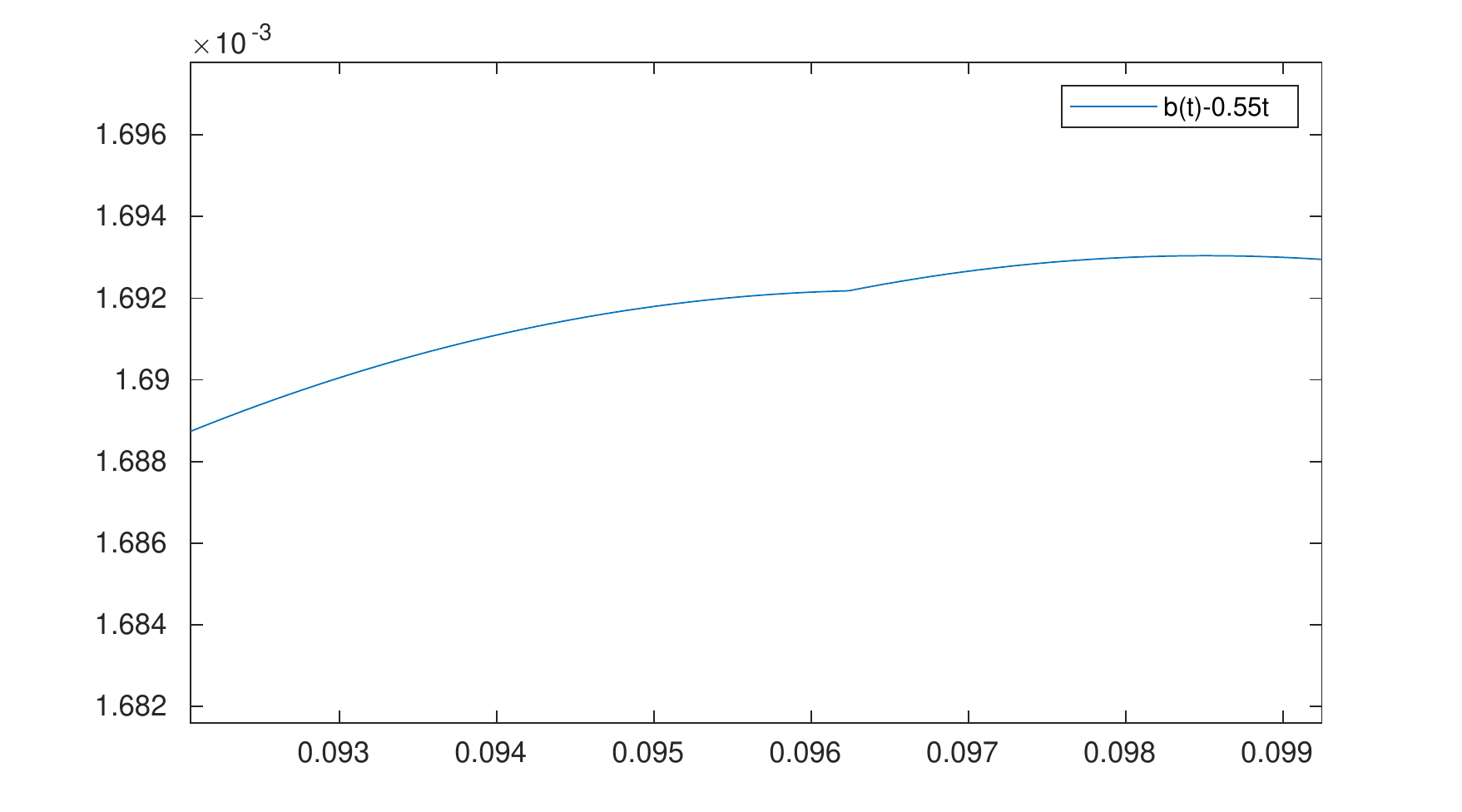}
	\caption{The tilted stopping boundary $b(t)-0.55t$ of the Chow-Robbins game in Example \ref{ex:1}}
	\label{fig:nconvS}
\end{figure}

Let $\xi_1, \xi_2 \dots$ be iid. random variables with 
$P(\xi_i =-1.5) = \frac{24}{85}$, $P(\xi_i =0.2) =\frac{25}{68} $, $ P(\xi_i+1) =\frac{7}{20} $,
$X_n = \sum_{i=1}^{n}\xi_i$, $g(t,x)=\frac{x}{t}$ and
$V(t,x)=\sup_{\tau  }\e \left [ \frac{x+ X_\tau}{t+\tau}\right ].$
The $\xi_i$ are centered, have unit variance and the value function $V$ has the upper bound given in \cite{snproblem}. We numerically calculate an estimation of $V$ and with that the stopping boundary $b(t)$, the absolute error of our calculation is approximately $
10^{-6}$. In Figure \ref{fig:nconv2} we see the stopping boundary $b(t)$. It looks smooth and concave, only if we zoom in and tilt it for better visibility, we see that this conception is misleading.\\
The point $(3.697,1.089)$ lies on the boundary $\partial C$ and $(3.697+1,1.089+0.2)$ again lies on $\partial C$, hence we expect $b(t)$ to be non smooth in $t=3.697$. In Figure \ref{fig:nconv} we see a plot of $b(t)-0.2085t$. The effect is small, but we can see clearly, that $C$ is not convex. If we zoom in further, we can see more non-smooth points, see Figure \ref{fig:nconv3}.

  \begin{figure}[h]
	\centering
  \includegraphics[width=0.9\textwidth]{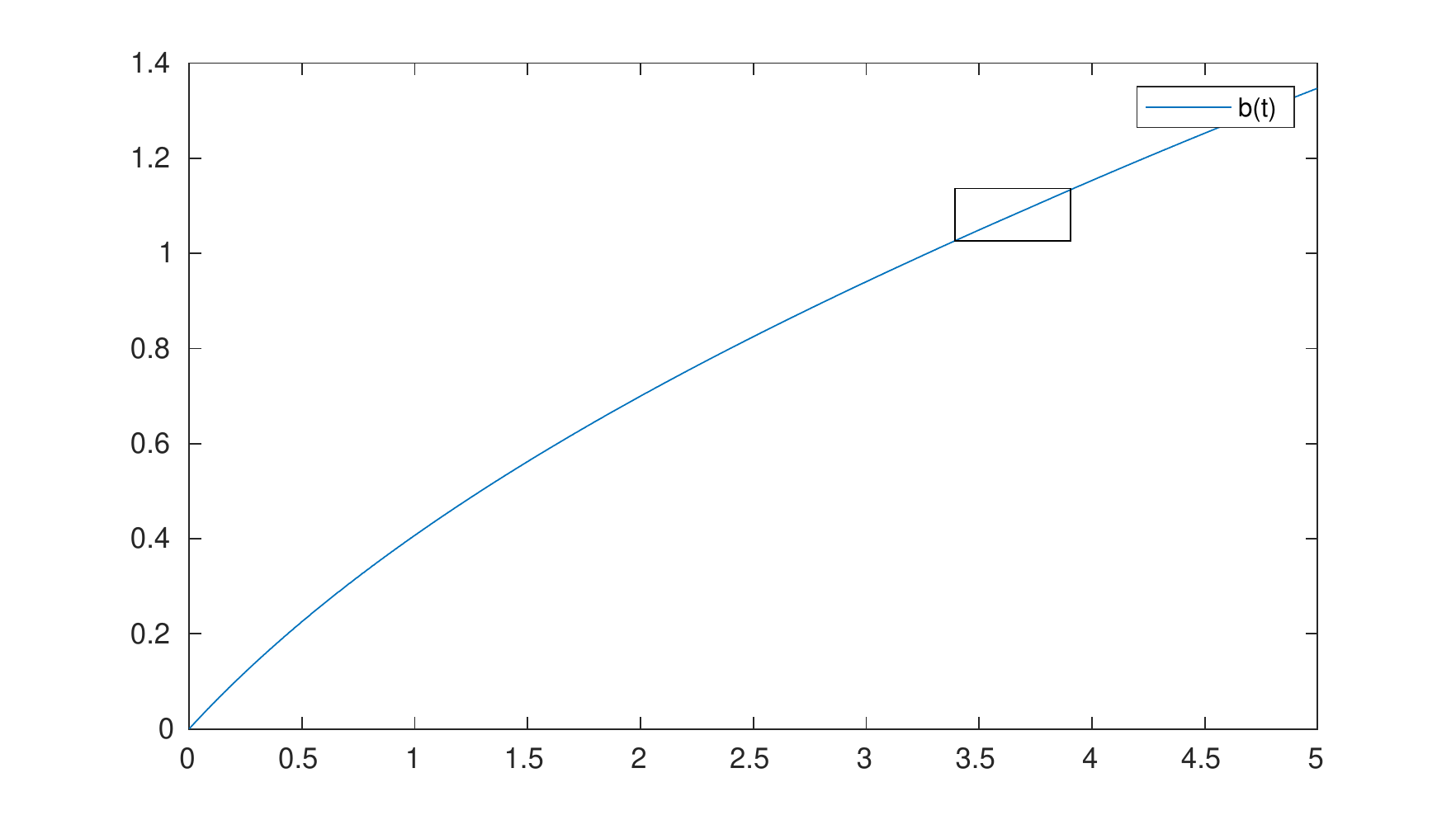}
	\caption{The stopping boundary $b(t)$ of the stopping problem in Example \ref{ex:2}. The boxed part is shown in Figure \ref{fig:nconv}.}
	\label{fig:nconv2}
\end{figure}

  \begin{figure}[h]
	\centering
  \includegraphics[width=0.9\textwidth]{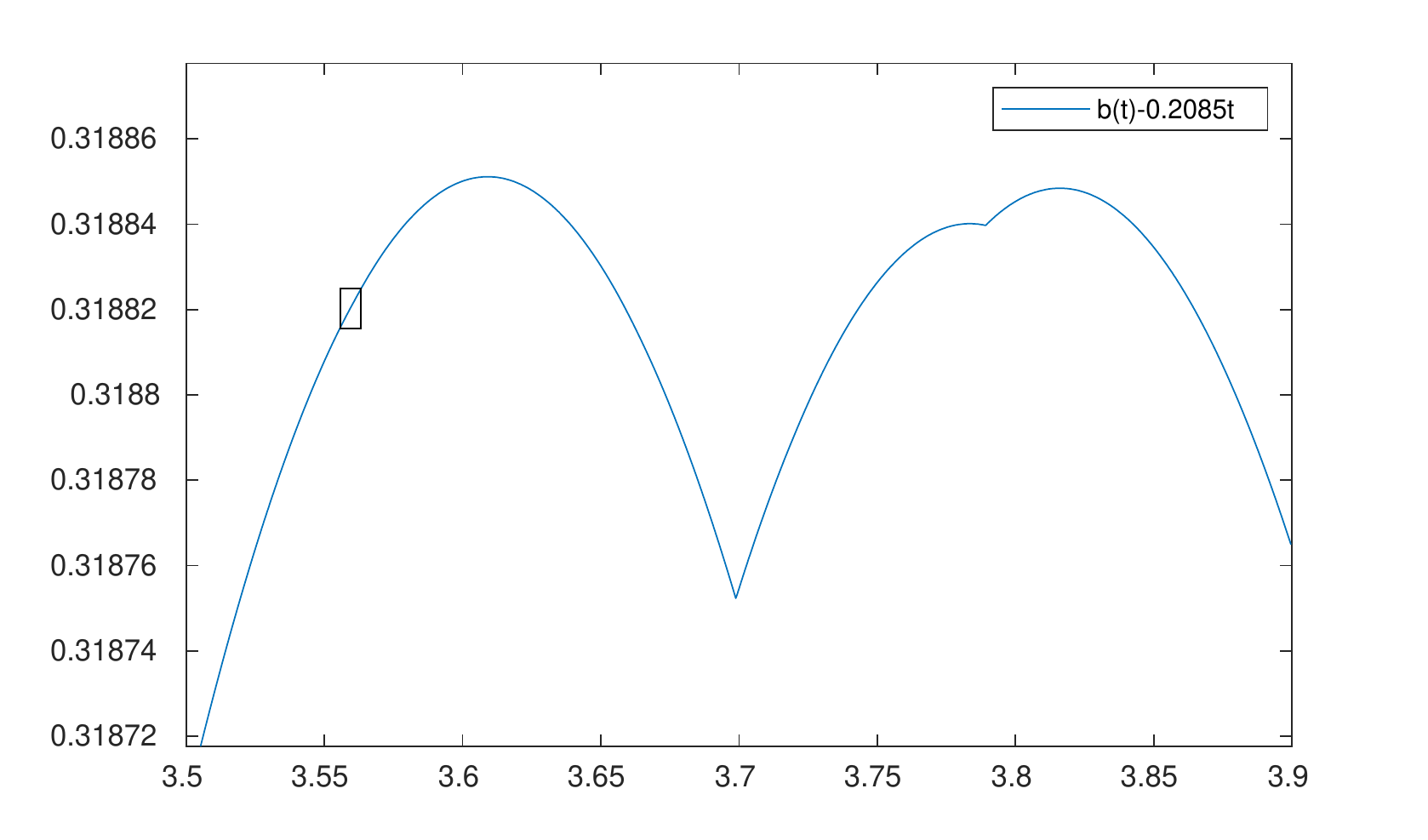}
	\caption{The tilted stopping boundary $b(t)-0.2085t$ of the stopping problem in Example \ref{ex:2}. The boxed part is shown in Figure \ref{fig:nconv3}.}
	\label{fig:nconv}
\end{figure}

  \begin{figure}[h]
	\centering
  \includegraphics[width=0.9\textwidth]{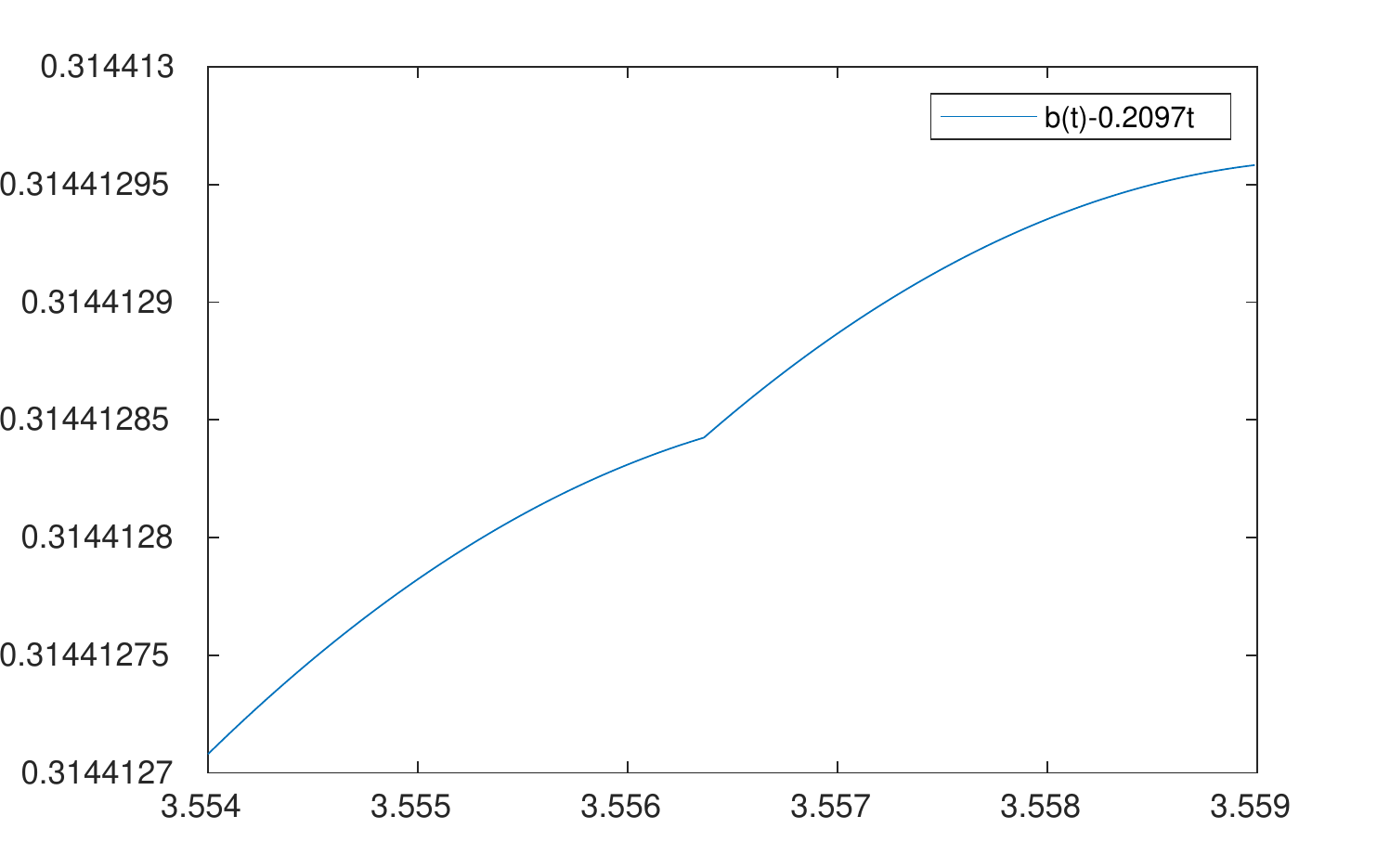}
	\caption{The tilted stopping boundary $b(t)-0.2097t$ of the stopping problem in Example \ref{ex:2}}
	\label{fig:nconv3}
\end{figure}

\end{example}

To conclude this section we give some examples, why some assumptions for Theorem \ref{th:dif} are necessary.

\begin{example}
 Smooth fit holds: Let $g$ be smooth with $g(t,x) = 0$ for $x\geq \sqrt{t}$, $g(t,x) < 0$ for $x< \sqrt{t}$ and $X$ a random walk. Then the problem fulfills all assumptions but the no smooth fit one and $V\equiv 0$, hence $V$ is smooth everywhere.
\end{example}

\begin{example}\label{ex:4}
 $b$ is bounded: Let $X$ be a Bernoulli random walk $P(\xi_i =-1) =P(\xi_i =1) = \frac{1}{2} $ and 
 $g(t,x) = \begin{cases}
-x^{2} & x \leq 0 \\
-\min\{\lceil x \rceil-x,x-\lfloor x\rfloor\}^2 \ & x > 0.
\end{cases}$
 Then $b(t) = -\frac{1}{2}$ and
 $V(t,x)=-\min\{\lceil x \rceil-x,x-\lfloor x\rfloor\}^2$ is differentiable if $\lceil x \rceil-x\neq \frac{1}{2}$, see Figure \ref{fig:4}.

   \begin{figure}[h]
	\centering
  \includegraphics[width=0.7\textwidth]{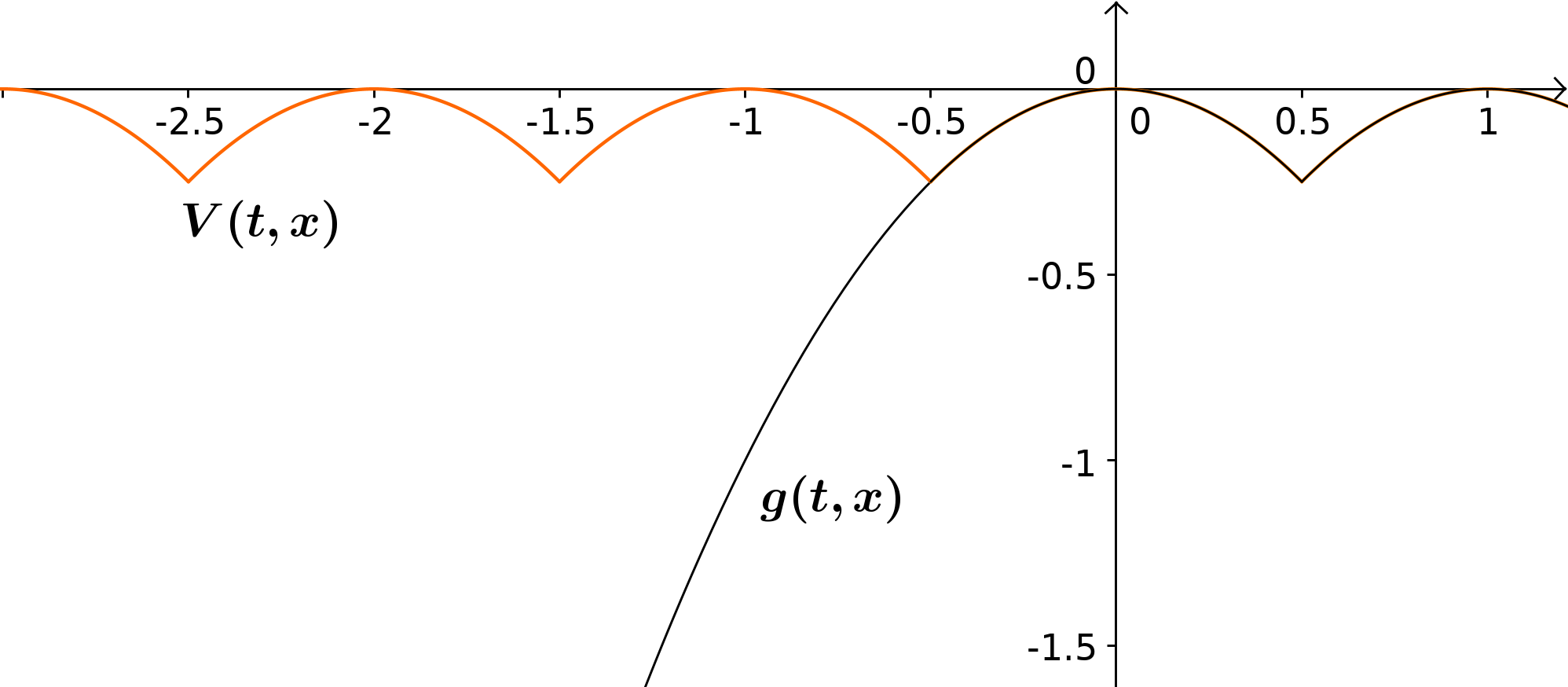}
	\caption{The value function of Example \ref{ex:4} }
	\label{fig:4}
\end{figure}

\end{example}

\section{Conclusion}
We have shown that the value functions of the analyzed stopping problems is not differentiable on a dense subset of $C$. We have shown the non-smoothness in the $x$ component, but the value function will not be differentiable in $t$ in the \textit{critical points} either. Furthermore, it seems likely that the stopping boundary $b(t)$ is not smooth on a dense set as well.
This shows that it is highly unlikely to find a closed form for $V$ or $b$.
Although we can use discrete time stopping problems to approximate continuous time problems and vice versa, their solutions may have different analytical properties. This shows, that we need to be careful with assumptions about the solutions of discrete time problems, because our intuition might be misleading.
We restricted our proof to specific cases, but the described phenomena seem to be typical for discrete time stopping problems. \\
Examples of functions that are continuous but not differentiable on a dense subset can be found in \cite{klatte02}.

 \clearpage

   \bibliographystyle{alpha}
 \bibliography{Vnichtdiffbar}

\begin{thebibliography}{LLY05}

\bibitem[AES17]{alvarez17}
Luis H.~R. Alvarez~E. and Paavo Salminen.
\newblock Timing in the presence of directional predictability: optimal
  stopping of skew brownian motion.
\newblock {\em Mathematical Methods of Operations Research}, 86(2):377--400,
  Oct 2017.

\bibitem[CF19]{snproblem}
S{\"o}ren {Christensen} and Simon {Fischer}.
\newblock {On the Sn/n-Problem}.
\newblock {\em arXiv e-prints}, page arXiv:1909.05762, Sep 2019.

\bibitem[CR65]{chow1965}
Y.~S. Chow and Herbert Robbins.
\newblock On optimal stopping rules for $s_{n}/n$.
\newblock {\em Illinois J. Math.}, 9(3):444--454, 09 1965.

\bibitem[KK02]{klatte02}
Diethard Klatte and Bernd Kummer.
\newblock {\em Nonsmooth Equations in Optimization - Regularity, Calculus,
  Methods and Applications}.
\newblock Kluwer Academic Publ., Dordrecht-Boston-London, 01 2002.

\bibitem[LLY05]{lai2006}
Tze Leung~Lai and Yi-Ching Yao.
\newblock The optimal stopping problem for $\frac{S_n}{n}$ and its
  ramifications.
\newblock {\em Technical reports, Department of statistics, Stanford
  University}, (2005-22), 01 2005.

\bibitem[PS06]{peskir06}
Goran Peskir and Albert Shiryaev.
\newblock {\em Optimal Stopping and Free-Boundary Problems}.
\newblock Birkhäuser Basel, 2006.

\bibitem[SS15]{strulovici15}
Bruno Strulovici and Martin Szydlowski.
\newblock On the smoothness of value functions and the existence of optimal
  strategies in diffusion models.
\newblock {\em Journal of Economic Theory}, 159:1016 -- 1055, 2015.
\newblock Symposium Issue on Dynamic Contracts and Mechanism Design.

\bibitem[Vil07]{villeneuve07}
St\'{e}phane Villeneuve.
\newblock On threshold strategies and the smooth-fit principle for optimal
  stopping problems.
\newblock {\em Journal of Applied Probability}, 44(1):181--198, 2007.

\end{thebibliography}
\end{document}